\numberwithin{equation}{section}
\newtheorem{prop}{Proposition}[section]
\newtheorem{theorem}[prop]{Theorem}
\newtheorem{definition}[prop]{Definition}
\def\begeq{\begin{equation}}
\def\endeq{\end{equation}}
\def\<{\langle}
\def\>{\rangle}
\def\({\left(}
\def\){\right)}
\def\p{\partial}
\def\si{\sigma}
\def\om{\omega}
\def\Om{\Omega}
\def\we{\wedge}
\def\al{\alpha}
\def\be{\beta}
\def\ep{\epsilon}
\DeclareMathOperator{\sh}{\mathcal {SH}}
 \DeclareMathOperator{\la}{\lambda}
\begin{document}

\title{ Liouville and Calabi-Yau type theorems for complex
Hessian equations}
\author{S\l awomir Dinew, S\l awomir Ko\l odziej}
\date{}
\maketitle
\begin{abstract} We prove a Liouville type theorem for entire maximal $m$-subharmonic functions
in $\mathbb C^n$ with bounded gradient. This result, coupled with
 a standard blow-up argument, yields a (non-explicit) a
priori gradient estimate for the complex Hessian equation on a
compact K\"ahler manifold. This terminates the program, initiated
in \cite{HMW},  of solving the non-degenerate Hessian equation on
such manifolds in full generality. We also obtain, using our
previous work, continuous weak solutions in the degenerate case
for the right hand side in some $L^p , $ with a sharp bound on
$p$.

\end{abstract}
\section*{Introduction}
The complex Hessian equation $$ (\om+dd^cu)^{m}\we\om^{n-m}=f\om^n
$$
($\om$ a K\"ahler form, $f>0$ given function, $1<m<n$) can be seen
as an intermediate step between the Laplace and the complex
Monge-Amp\`ere equation. It encompasses the most natural
invariants of the complex Hessian matrix of a real valued function
- the elementary symmetric polynomials of its eigenvalues. Its
real counterpart has numerous applications and has been thoroughly
studied (see \cite{W}). Some equations of similar type appear in
the study of geometric deformation flows such as the $J$- flow
(\cite{Ch2,Ch3,D, SW}). Thus it can be expected that Hessian equations
themselves may also have interesting applications in
 geometry, and indeed they came up recently  in \cite{AV} in  problems related to {\it
 quaternionic geometry}.

Due to their non linear structure, and similarly to the
Monge-Amp\`ere equation, the Hessian equation
 is considered only over a suitable subclass of functions for which ellipticity is guaranteed.
 These subclasses, in a sense, interpolate between subharmonic and plurisubharmonic functions and
 thus they are also interesting  to study from potential theoretic point of view (\cite{Bl},
 \cite{DK}, \cite{Chi}). In the K\"ahler manifold case the corresponding classes of functions,
 namely $\om-m$ subharmonic ones, do not yield, in general, classical positive definite
 metrics, which distinguishes them from  $\om$-plurisubharmonic functions.

 The {\it real} Hessian
has been extensively
studied, both in suitable domains of $\mathbb R^n$, and on
Riemannian manifolds. Likewise, the theory of the complex Hessian
equation is being developed in domains of $\mathbb C^n$ and on
compact K\"ahler manifolds. The case of domains in $\mathbb C^n$
is to a large extent understood (\cite{Li}, \cite{Bl}, \cite{DK}).
In particular the classical Dirichlet problem is solvable in the
smooth category (\cite{Li}, \cite{Bl}) under  standard regularity
assumptions on the data and some convexity assumptions on the
boundary of the  domain.

The corresponding  problem on a compact $n$-dimensional K\"ahler
manifold $(X,\om)$
\begin{equation}\label{AA}
\begin{cases}u\ {\rm is}\ \om-m\ {\rm subharmonic}\\
 (\om+dd^cu)^{m}\we\om^{n-m}=f\om^n\\
sup_Xu=0,
\end{cases}
\end{equation}
with $f$ smooth, strictly positive, and satisfying the necessary
assumption $\int_Xf\om^n=\int_X\om^n$ remained unsolved in full
generality until now.  The equation was solved  under the
assumption of nonnegative holomorphic bisectional curvature
(\cite{Hou}, \cite{J}, \cite{Kok}) or under additional convexity
assumptions on $u$ itself (\cite{G}, \cite{Z}).

The natural approach by continuity method, as in the case of the
complex Monge-Amp\`ere equation (\cite{Y}), faces an obstacle
which does not appear in Yau's solution to the Calabi conjecture:
the $\mathcal C^2$ a priori estimate needed for the closedness
part seems to depend on the a priori gradient bound. Such a
gradient bound in the Monge-Amp\`ere case was obtained by B\l ocki
and Guan independently (\cite{Bl2}, \cite{G}) but their methods
seem to fail in the Hessian case.

Thus the only remaining piece in the program of solving  the
complex Hessian equation on compact K\"ahler manifolds is the a
priori gradient estimate.

In this note we prove such an estimate by exploiting blow-up
analysis. This approach was advocated by Hou-Ma-Wu in \cite{HMW}. In the setting of complex fully nonlinear  equations the blow-up analysis was introduced by Chen in \cite{Ch1}. In essence it boils down to proving a Liouville type theorem for a suitable class of entire functions in $\mathbb C^n$. The essential difficulty that arises is that just like in the case of entire subharmonic functions in $\mathbb R^n,\ n\geq 3$, there are  bounded, nonconstant and entire $m$-subharmonic functions. Nevertheless we obtain a
Liouville type theorem for entire  {\it maximal} $m$-subharmonic
functions in $\mathbb C^n$ and the gradient bound then follows as
a corollary. Below we give the precise statements:
\begin{theorem}[\bf Main result]
Let $u$ be a globally bounded maximal $m$ subharmonic function in $\mathbb C^n,\ 1<m<n$.
Suppose moreover that $sup_{\mathbb C^n}||\nabla u||<\infty$. Then $u$ is constant.
\end{theorem}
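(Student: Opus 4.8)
\noindent\emph{Proof proposal.} The plan is to linearize the maximality equation and reduce the theorem to a Liouville property for a linear second order operator with smooth coefficients; the essential difficulty is that this operator is, in general, only degenerate elliptic. As a first step I would reduce to $u$ smooth: a maximal $m$-subharmonic function can be approximated locally by smooth $m$-subharmonic functions with uniformly bounded gradients, and the maximality condition — which in $\mathbb C^n$ reads $(\dc u)^m\we\om^{n-m}=0$ for $\om$ the standard K\"ahler form, equivalently $\si_m(\lambda(u_{i\bar\jmath}))=0$ with the eigenvalues $\lambda(u_{i\bar\jmath})$ of the complex Hessian lying in the closure of the positive $m$-cone — is stable under locally uniform limits. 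So it suffices to argue with smooth $u$ and pass to the limit at the end; since maximal $m$-subharmonic functions need not be $\mathcal C^{1,1}$, this regularity bookkeeping is one of the technical points I expect to require care.

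Assume $u$ smooth, set $a:=\sup_{\mathbb C^n}\|\nabla u\|$, and suppose $u$ is not constant, so $a\in(0,\infty)$. Let $F^{i\bar\jmath}:=\p\si_m/\p u_{i\bar\jmath}$ be the cofactor matrix of the $m$-th complex Hessian — Hermitian and positive semidefinite because $u$ is $m$-subharmonic — and put $L:=\sum_{i,j}F^{i\bar\jmath}\p_{z_i}\p_{\bar z_j}$. Differentiating $\si_m(\lambda(u_{i\bar\jmath}))=0$ yields the facts I would use: by Euler's identity for the $m$-homogeneous $\si_m$ one gets $Lu=m\,\si_m=0$, so $u$ itself is $L$-harmonic; differentiating once along any fixed real direction $e$ gives $L(\p_e u)=0$, so every first derivative is $L$-harmonic; hence by the product rule $L(\|\nabla u\|^2)=2\sum_e\sum_{i,j}F^{i\bar\jmath}\,\p_{z_i}(\p_e u)\,\overline{\p_{z_j}(\p_e u)}\ge 0$ (and, using the concavity of $\si_m^{1/m}$ on the $m$-cone together with $L(\p_e u)=0$, also $L(\p_e\p_e u)\ge 0$). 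Thus $u$ and all its first derivatives are \emph{bounded} $L$-harmonic functions on $\mathbb C^n$, while $\|\nabla u\|^2$ is a bounded $L$-subsolution. If $L$ were uniformly elliptic, the classical Liouville theorem would force each $\p_e u$ to be constant, hence $u$ to be affine, hence — being bounded — constant. So the entire problem is to run this argument when $L$ genuinely degenerates, which it does wherever the complex Hessian meets the boundary of the $m$-cone.

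I expect this last step to be the main obstacle, and would attack it by blow-up. Choosing $z_k$ with $\|\nabla u(z_k)\|\to a$ and scales $R_k\to\infty$, the rescalings $v_k(z):=R_k^{-1}\bigl(u(z_k+R_kz)-u(z_k)\bigr)$ are $a$-Lipschitz and maximal $m$-subharmonic, vanish at $0$ with $\|\nabla v_k(0)\|\to a$, and satisfy $\sup|v_k|\le 2R_k^{-1}\sup|u|\to 0$; along a subsequence $v_k\to v$ locally uniformly, with $v$ maximal $m$-subharmonic, $a$-Lipschitz, and $v\equiv 0$. The contradiction must then come from the rigidity the maximality equation forces on the $v_k$: one wants an interior gradient estimate, available \emph{after} rescaling, to the effect that a maximal $m$-subharmonic function which is uniformly small on large balls has small gradient there. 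Concretely I would examine the $L_{v_k}$-subsolution $\|\nabla v_k\|^2$, which approaches its supremum near $0$, and try to push through a strong-maximum-principle / propagation-of-degeneracy statement for the operators $L_{v_k}$: the leverage is that $\nabla v_k\neq 0$ near such a point restricts how degenerate $L_{v_k}$ can be there, and where $L_{v_k}$ is maximally degenerate the algebraic structure of $\si_m$ on the boundary of the $m$-cone should permit a reduction to a genuinely lower-dimensional plurisubharmonic problem, where Liouville is classical. Making this reduction precise, and supplying the regularity needed to justify the pointwise second-order computations on the a priori only Lipschitz limit $v$, is where the real work lies.
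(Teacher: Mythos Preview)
Your proposal has several genuine gaps, and the overall strategy is quite different from what the paper does.

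\medskip
\textbf{The smoothing step does not work.} Convolution approximants $u_\epsilon$ of a maximal $m$-sh function are smooth and $m$-sh with the same Lipschitz bound, but they are \emph{not} maximal: you cannot differentiate the identity $\sigma_m(\lambda(u_{\epsilon,i\bar\jmath}))=0$ because it simply does not hold. Conversely, knowing the theorem for smooth maximal functions tells you nothing about $u_\epsilon$. So the reduction to smooth $u$ is illusory, and without it the linearized operator $L=\sum F^{i\bar\jmath}\partial_i\bar\partial_j$ is not even defined pointwise (the function produced by the blow-up in the paper's application is only $\mathcal C^{1,\alpha}$).

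\medskip
\textbf{Even granting smoothness, the linearization is not usable as stated.} The coefficients $F^{i\bar\jmath}=\partial\sigma_m/\partial u_{i\bar\jmath}$ are polynomials of degree $m-1$ in the second derivatives of $u$; you have no $\mathcal C^2$ bound on $u$, only a Lipschitz bound. So $L$ is a degenerate, non-divergence operator with \emph{unbounded} coefficients, and neither Krylov--Safonov nor De Giorgi--Nash--Moser applies to give a Liouville theorem even in the nondegenerate directions.

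\medskip
\textbf{The blow-up is circular.} Your rescalings $v_k(z)=R_k^{-1}\bigl(u(z_k+R_kz)-u(z_k)\bigr)$ converge uniformly to $0$, but only in $\mathcal C^{0,\alpha}$; there is no $\mathcal C^1$ convergence, so $\|\nabla v_k(0)\|\to a$ does not contradict $v=0$. You then ask for an interior gradient estimate of the form ``maximal, $|v|\le\epsilon$ on $B_R$ $\Rightarrow$ $|\nabla v(0)|\le C\epsilon/R$''. But applying exactly that estimate to $u$ on balls of radius $R\to\infty$ \emph{is} the Liouville theorem you are trying to prove. The blow-up has bought nothing: $v_k$ inherits all the structure of $u$ and no new smallness that you can exploit without the result itself. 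The vague appeal to ``propagation of degeneracy'' and ``reduction to lower dimension'' is where the actual content would have to be, and it is not supplied.

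\medskip
\textbf{What the paper does instead.} The argument is purely potential-theoretic and never linearizes. It runs by induction on $n$, and the engine is the identity $dd^c u^2 = 2\,du\wedge d^c u + 2u\,dd^c u$ combined with the fact that ball-averages $[u^2]_r$ remain $m$-sh. A dichotomy is set up: either one can find rotations/translations $G_k$ and radii $r_k\to\infty$ along which $\int_{B_{r_k}}|\partial_{z_1}(u\circ G_k)|^2\to 0$ while a certain average gap persists, or not. In the first case the locally uniform limit is independent of $z_1$ and one descends to $\mathbb C^{n-1}$. In the second case one gets a uniform lower bound $\frac{1}{\mathrm{Vol}\,B_r}\int_{B_r}|\partial_w u|^2\ge c_1$ for all directions $w$ on a suitable sublevel set, which yields $dd^c[u^2]_r\ge 2[u\,dd^c u]_r + c_2\beta$ there. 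Wedging with $(dd^c u)^{m-1}\wedge\beta^{n-m}$ and applying the comparison principle on a bounded set (obtained by adding $-\epsilon\|z\|^2$) forces $(dd^c u)^{m-1}\wedge\beta^{n-m+1}=0$; iterating $m-1$ times gives $\beta^n=0$, a contradiction. The Lipschitz bound is used only to make the ``constant in $z_1$'' step quantitative and to invoke Arzel\`a--Ascoli; no second-order regularity is ever assumed.
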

\begin{theorem}[\bf Gradient estimate] Let $(X,\om)$ be a compact K\"ahler manifold of complex
dimension $n$. Suppose that $u\in\mathcal C^4(X)$ is an $\om-m$
subharmonic function solving the equation
$$
 (\om + dd^c u)^m \we \om ^{n-m} = f\om ^n
$$
and $\max u =0$, with $f$ a $\mathcal C^2$ smooth positive function. Then
$$sup_X||\nabla u||_{\om}\leq C,$$
with a constant $C$ dependent on $n,\ m,\ ||f^{1/m}||_{\mathcal C^2}$ and the bound on the
bisectional curvature of $(X,\om)$.
\end{theorem}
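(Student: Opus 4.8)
\medskip
\noindent\textbf{Proof idea.}
The plan is to argue by contradiction through a blow-up (rescaling) analysis that produces, in the limit, an entire maximal $m$ subharmonic function to which the Main result applies. Suppose the estimate failed. Then for every $j$ there would exist a compact K\"ahler manifold $(X_j,\om_j)$ with bisectional curvature bounded by a fixed constant $B$, a positive $f_j$ with $\|f_j^{1/m}\|_{\mathcal C^2}\le A$, and an $\om_j$-$m$ subharmonic $u_j\in\mathcal C^4(X_j)$ solving $(\om_j+\dc u_j)^m\we\om_j^{n-m}=f_j\om_j^n$ with $\max u_j=0$ and $M_j:=\sup_{X_j}\|\nabla u_j\|_{\om_j}\to\infty$; fix $x_j\in X_j$ realising this supremum. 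I would rely on two a priori facts that do \emph{not} involve the gradient bound: the uniform estimate $\|u_j\|_{L^\infty}\le K=K(n,m,A)$, coming from the Ko\l odziej-type bound of \cite{DK} together with the normalization $\int_{X_j}f_j\om_j^n=\int_{X_j}\om_j^n$; and the second order estimate $\sup_{X_j}|\dc u_j|_{\om_j}\le C_1(1+M_j^2)$ with $C_1$ depending only on $n,m,A,B$, which is precisely the part of the program of \cite{HMW} that requires no sign on the curvature.

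Next I would rescale. In $\om_j$-K\"ahler normal coordinates centered at $x_j$ on a ball of fixed radius (controlled by $B$), identified with $B(0,2\rho)\subset\mathbb C^n$, set $F_j(z):=z/M_j$ and $v_j:=u_j\circ F_j$ on $B(0,2\rho M_j)$, and let $\hat\om_j$ be the K\"ahler form whose coefficients are those of $\om_j$ evaluated at $z/M_j$; then $F_j^*\om_j=M_j^{-2}\hat\om_j$ and, by the curvature bound, $\hat\om_j\to\beta$ (the standard form on $\mathbb C^n$) in $\mathcal C^2_{loc}$. By construction $\|v_j\|_{L^\infty}\le K$ on the whole ball, $\|\nabla v_j\|_{\hat\om_j}\le 1$ everywhere with $\|\nabla v_j(0)\|=1$, and $|\dc v_j|_{\hat\om_j}(z)=M_j^{-2}|\dc u_j|_{\om_j}(z/M_j)\le 2C_1$ on compacta once $j$ is large. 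In particular $\Delta v_j$ is bounded in $L^\infty_{loc}$, so by Calder\'on--Zygmund $v_j$ is bounded in $W^{2,p}_{loc}(\mathbb C^n)$ for every $p<\infty$; along a subsequence $v_j\to v$ in $\mathcal C^{1,\alpha}_{loc}$ and weakly in $W^{2,p}_{loc}$. The limit $v$ is globally bounded ($\|v\|_{L^\infty}\le K$), globally Lipschitz with $\sup_{\mathbb C^n}\|\nabla v\|\le 1$, and satisfies $\|\nabla v(0)\|=1$ by $\mathcal C^1_{loc}$ convergence, hence is \emph{non-constant}.

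It then remains to identify $v$ as maximal $m$ subharmonic. Pulling the equation back by $F_j$, using $F_j^*\om_j=M_j^{-2}\hat\om_j$ and dividing by $M_j^{-2(n-m)}$, one gets
\begin{equation}
\big(M_j^{-2}\hat\om_j+\dc v_j\big)^m\we\hat\om_j^{n-m}=(f_j\circ F_j)\,M_j^{-2m}\,\hat\om_j^n .
\end{equation}
In the binomial expansion of the left side the term with $k$ factors $\dc v_j$ equals $\binom{m}{k}M_j^{-2(m-k)}(\dc v_j)^k\we\hat\om_j^{n-k}$; for $k<m$ it tends to $0$ locally (these forms have locally bounded mass since $\dc v_j$ is bounded in $L^\infty_{loc}$ and $\hat\om_j\to\beta$), the right side tends to $0$ as well, and the term $k=m$ is $(\dc v_j)^m\we\hat\om_j^{n-m}$. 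Moreover each $v_j$ is $\hat\om_j$-$m$ subharmonic — more precisely $\dc v_j+M_j^{-2}\hat\om_j$ has eigenvalues with respect to $\hat\om_j$ in the cone $\Gamma_m$ — so the limit $v$ is $m$ subharmonic with respect to $\beta$; and since the $v_j$ are uniformly bounded and converge locally uniformly while $\hat\om_j\to\beta$ smoothly, the weak continuity of the complex Hessian operator on bounded $m$ subharmonic functions (\cite{Bl}, \cite{DK}) yields $(\dc v_j)^m\we\hat\om_j^{n-m}\to(\dc v)^m\we\beta^{n-m}$. Therefore $(\dc v)^m\we\beta^{n-m}=0$: $v$ is a globally bounded, non-constant, maximal $m$ subharmonic function on $\mathbb C^n$ with bounded gradient, contradicting the Main result and proving the estimate.

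The step I expect to be the main obstacle is the passage to the limit in the fully nonlinear term $(\dc v_j)^m\we\hat\om_j^{n-m}$: a naive weak-$*$ limit of the coefficients of $\dc v_j$ does not commute with forming the $m$-fold product, so one must instead invoke the pluripotential-theoretic continuity of the Hessian operator along uniformly bounded, locally uniformly convergent sequences of (here, $\hat\om_j$-twisted) $m$ subharmonic functions, and verify that the vanishing lower order term $M_j^{-2}\hat\om_j$ together with the convergence $\hat\om_j\to\beta$ does not alter the limiting measure. The remaining inputs — the uniform $L^\infty$ bound and the second order estimate in terms of $1+\|\nabla u\|^2$ — are quoted from \cite{DK} and \cite{HMW}; the only subtlety there is to make sure they are genuinely independent of the gradient bound, so that the blow-up argument is not circular.
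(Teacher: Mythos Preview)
Your proposal is correct and follows essentially the same blow-up argument as the paper: assume the gradient bound fails, rescale by the gradient maximum, use the Hou--Ma--Wu second order estimate to get $\mathcal C^{1,\alpha}$ compactness, and pass to a nonconstant bounded Lipschitz maximal $m$-sh limit on $\mathbb C^n$, contradicting the Liouville theorem. The only cosmetic difference is that you let the manifolds $(X_j,\om_j)$ vary (tracking dependence on the curvature bound alone), whereas the paper carries out the argument on a fixed $(X,\om)$; otherwise the ingredients and the identification of the limiting equation via the pluripotential continuity of the Hessian operator are the same.
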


Let us observe that the estimate remains valid for nonnegative $f$ provided $||f^{1/m}||_{\mathcal C^2}$ is bounded.

As  a corollary, our bound, coupled with the existent results
(\cite{HMW}, see  Preliminaries), yields that the equation
(\ref{AA}) is solvable in the smooth category.

\begin{theorem}\label{HE}
The  equation (\ref{AA}) has a solution on any compact K\"ahler
manifold.
\end{theorem}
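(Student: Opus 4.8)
The plan is to run the continuity method in the smooth category; by the discussion above, its only previously-missing ingredient --- the a priori gradient bound --- is now supplied by Theorem 2, so the proof becomes an assembly of otherwise standard pieces. I would normalize $\int_X\om^n=1$ and, for $t\in[0,1]$, consider the family
$$(\om+dd^cu_t)^m\we\om^{n-m}=f_t\,\om^n,\qquad \sup_Xu_t=0,$$
with $f_t:=tf+(1-t)$. Each $f_t$ is smooth, satisfies $\min(1,\min_Xf)\le f_t\le 1+\max_Xf$, and automatically obeys $\int_Xf_t\om^n=1$; at $t=0$ the function $u_0\equiv0$ solves the equation. Letting $S\subset[0,1]$ be the set of parameters admitting a smooth $\om-m$ subharmonic solution, one has $0\in S$, and it suffices to show that $S$ is open and closed.

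For openness I would invoke the implicit function theorem in H\"older spaces. At a solution $u_t$ the differential of $v\mapsto(\om+dd^cv)^m\we\om^{n-m}/\om^n$ is the linear second-order operator $v\mapsto m(\om+dd^cu_t)^{m-1}\we dd^cv\we\om^{n-m}/\om^n$, which carries no zeroth-order term and is elliptic, since $u_t$ --- being a smooth solution with $f_t>0$ --- is strictly $\om-m$ subharmonic, so the $(n-1,n-1)$-form $(\om+dd^cu_t)^{m-1}\we\om^{n-m}$ is positive. By Stokes' theorem this operator maps into functions of $\om^n$-mean zero, and elliptic theory makes it an isomorphism there; hence solvability persists under small perturbations of $t$, i.e.\ of $f_t$, and $S$ is open.

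For closedness I would collect uniform a priori estimates along $S$ in every $\mathcal C^{k,\alpha}(X)$, in the usual order: (i) the bound $\|u_t\|_{L^\infty}\le C_0$, from the pluripotential/capacity estimates for the complex Hessian operator (\cite{DK}), uniform because $f_t$ is uniformly bounded; (ii) the gradient bound $\sup_X\|\nabla u_t\|_\om\le C_1$, which is exactly Theorem 2 applied to $u_t$ and $f_t$, with uniformity in $t$ automatic since $f_t$ stays in a fixed bounded, uniformly-positive interval and a fixed ball of $\mathcal C^2$, whence $\sup_t\|f_t^{1/m}\|_{\mathcal C^2}<\infty$; (iii) a two-sided Hessian bound $\|dd^cu_t\|_\om\le C_2$, obtained by feeding $C_0,C_1$ into the second-order estimate of Hou--Ma--Wu \cite{HMW} (which controls the trace of $\om+dd^cu_t$ with respect to $\om$) together with the lower bound on $dd^cu_t$ built into $\om-m$ subharmonicity and the equation, after which the equation is uniformly elliptic along $S$; (iv) a complex Evans--Krylov estimate, available because $\sigma_m^{1/m}$ is concave on the positive $m$-cone, upgrading (iii) to a uniform $\mathcal C^{2,\alpha}$ bound; (v) Schauder bootstrapping, using smoothness of $(x,t)\mapsto f_t(x)$, delivering uniform $\mathcal C^{k,\alpha}$ bounds for all $k$. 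Hence $S$ is closed.

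By connectedness of $[0,1]$ this gives $S=[0,1]$, and the solution at $t=1$ solves (\ref{AA}). I do not anticipate a genuine obstacle: the one hard estimate --- the gradient bound (Theorem 2, which in turn rests on the Liouville-type Theorem 1) --- is assumed, and among the remaining ingredients the only one requiring a little care is checking that the second-order estimate of \cite{HMW} holds uniformly along the chosen continuity path, which the bounds in (i)--(ii) render routine.
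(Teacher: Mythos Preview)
Your proposal is correct and follows precisely the approach the paper itself indicates: the continuity method whose ingredients (openness from \cite{Hou}, the uniform estimate from \cite{Hou} or \cite{DK}, the second-order estimate from \cite{HMW}, and Evans--Krylov plus Schauder) were already in place, with Theorem~0.2 supplying the sole missing piece, the a priori gradient bound. The paper treats Theorem~\ref{HE} as an immediate corollary of this assembly rather than writing out the details you have supplied, but the argument is the same.
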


From Theorem \ref{HE} and the results of our previous paper
\cite{DK} we also obtain weak continuous solutions for nonnegative
$f$ in some $L^p$ spaces. The result is sharp as far as the
exponent $p$ is concerned.

\begin{theorem}\label{weaksolK} Let $X$ be a compact K\"ahler manifold.
 Then for $p>n/m $ and $f\in L^p (dV)$  there exists a unique
function $u\in \mathcal{SH} _m (X, \om )$ satisfying
$$
 (\om + dd^c u)^m \we \om ^{n-m} = f\om ^n
$$
and $\max u =0.$
\end{theorem}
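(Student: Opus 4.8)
\emph{Proof proposal.} The plan is to derive Theorem~\ref{weaksolK} by combining Theorem~\ref{HE} with the a priori and stability estimates established in our earlier work \cite{DK}: Theorem~\ref{HE} provides smooth solutions whenever the right-hand side is smooth and strictly positive, and the potential theory of \cite{DK} upgrades these to continuous solutions when the data lies merely in $L^p$, $p>n/m$. \emph{Uniqueness} then requires no new work: if $u,v\in\mathcal{SH}_m(X,\om)$ both solve the equation with the same $f$ and $\max_X u=\max_X v=0$, the stability estimate of \cite{DK} applied with the same right-hand side on both sides gives $\|u-v\|_{L^\infty}=0$. It remains to prove existence.

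\emph{Existence.} First regularize the data: choose $f_j\in\mathcal C^\infty(X)$ with $f_j>0$, $\int_X f_j\,\om^n=\int_X\om^n$, and $f_j\to f$ in $L^p(dV)$; concretely, mollify $f+\tfrac1j$ and rescale by a constant tending to $1$, so that $\|f_j\|_{L^p}\le\|f\|_{L^p}+1$ for every $j$. By Theorem~\ref{HE} there are $u_j\in\mathcal C^\infty(X)$, $\om$-$m$-subharmonic, with
$$(\om+\dc u_j)^m\we\om^{n-m}=f_j\,\om^n,\qquad \max_X u_j=0.$$
Since $p>n/m$, the a priori $L^\infty$ bound of \cite{DK} applies and yields $\|u_j\|_{L^\infty}\le C$ uniformly in $j$, with $C$ depending only on $n$, $m$, $(X,\om)$, $p$ and $\|f\|_{L^p}$. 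The stability estimate of \cite{DK} gives, for some $\gamma=\gamma(n,m,p)>0$,
$$\|u_j-u_k\|_{L^\infty}\le C\,\|f_j-f_k\|_{L^p}^{\gamma},$$
so $(u_j)$ is Cauchy in $\mathcal C^0(X)$; let $u$ be its uniform limit. Then $u$ is continuous, $\om$-$m$-subharmonic (being a uniform limit of such), and $\max_X u=0$. Finally, one passes to the limit in the equation: by the Bedford--Taylor type convergence theorem for the classes $\mathcal{SH}_m$ (see \cite{DK}, cf.\ \cite{Bl}), along a uniformly bounded, uniformly convergent sequence the complex Hessian measures converge weakly,
$$(\om+\dc u_j)^m\we\om^{n-m}\ \longrightarrow\ (\om+\dc u)^m\we\om^{n-m},$$
while $f_j\,\om^n\to f\,\om^n$ in $L^1(dV)$. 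Hence $(\om+\dc u)^m\we\om^{n-m}=f\,\om^n$, and $u$ is the required solution.

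\emph{Where the difficulty lies, and sharpness.} The argument is short once three analytic inputs are granted for $L^p$ data with $p>n/m$: the uniform $L^\infty$ bound, the quantitative stability estimate, and the weak continuity of the Hessian operator along uniform limits. All three are supplied by \cite{DK}; the only delicate point is that the uniform bound on $\|u_j\|_{L^\infty}$ is precisely what prevents mass from escaping in the limiting measure, so that the equation survives the passage to the limit. As for the exponent, $p=n/m$ is genuinely critical: a radial local model (in the spirit of the examples of \cite{DK}, reducing to the classical Monge--Amp\`ere obstruction at $p=1$ when $m=n$) produces an $f\in L^{n/m}(dV)$ on $X$ for which (\ref{AA}) has no bounded $m$-subharmonic solution, so the hypothesis $p>n/m$ cannot be relaxed.
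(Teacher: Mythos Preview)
Your proposal is correct and is precisely the argument the paper has in mind: the paper's own proof is the single sentence ``In \cite{DK} this theorem was shown under additional assumption of nonnegative holomorphic bisectional curvature. The same proof applies here once we have smooth solutions from Theorem~\ref{HE},'' and you have simply written out that same proof (approximate $f$ by smooth positive $f_j$, solve via Theorem~\ref{HE}, invoke the $L^\infty$ and stability estimates of \cite{DK} to get a uniform limit, and pass to the limit in the Hessian measure). There is nothing to add.
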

In \cite{DK} this theorem was shown under additional assumption of
nonnegative holomorphic bisectional curvature. The same proof
applies here once we have smooth solutions from Theorem \ref{HE}.

\bigskip
\section{Preliminaries}
We briefly recall the notions and results that we shall need later on.

Throughout the note $\Om$ will denote a relatively compact domain
in some $\mathbb C^n$, while $(X,\om)$ will be a compact K\"ahler
manifold of complex dimension $n$. Let also
 $d=\partial+\bar{\partial}$ and $d^c:=i(\bar{\partial}-\partial)$ denote the
 standard exterior differentiation operators. By $\beta:=dd^c||z||^2$ we shall
 denote
 the Euclidean K\"ahler form in $\mathbb C^n$, while $\om$ will be a fixed
 K\"ahler form on a compact K\"ahler manifold.

The standard {\it positivity cones} associated to $\si_m$ equations are defined by
\begin{equation}\label{ga}
\Gamma_m=\lbrace \la\in\mathbb R^n|\si_1(\la)> 0,\ \cdots,\
\si_m(\la)> 0\rbrace,
\end{equation}
with $\si_j(\la):=\sum _{1\leq i_1 < ... < i_j \leq n }\la _{i_1}\la _{i_2}
... \la _{i_m}$.

By G\aa rding theorem (see \cite{Ga})
these positivity cones are  convex.

If $\Om$ is a fixed domain in $\mathbb C$ and $u$ is any $\mathcal
C^2(\Om)$ function we call it $m-\beta$-subharmonic ($m$-sh for
short) if for any $z\in\Om$ the complex Hessian matrix
$\frac{\partial^2u} {\partial z_i\partial\bar{z}_j}(z)$ has
eigenvalues forming a vector in the closure of the cone
$\Gamma_m$. By analogy given a complex K\"ahler manifold $(X,\om)$
a $\mathcal C^2$ smooth function $u$ is called $\om-m$-subharmonic
if the form $\om+dd^cu$ has at each point $z$ eigenvalues (with
respect to $\om$) forming a vector in $\bar{\Gamma }_m$. Note that
as $\om$ depends on $z$ the positivity assumption is {\it
dependent} on $z$ too (it can be proved (\cite{Hou}) that
the positivity cones associated to any point $z$ are invariant
under parallel transport with respect to the Levi-Civita
connection of $\om$.).

In the language of differential forms $u$ is $m$-subharmonic (respectively
$\om-m$ subharmonic)
if and only if the following inequalities hold:
$$(dd^cu)^k\we\om^{n-k}\geq 0,\ k=1,\ \cdots,\ m,$$
(resp. $(\om+dd^cu)^k\we\om^{n-k}\geq 0,\ k=1,\ \cdots,\ m$).

It was observed by B\l ocki (\cite{Bl}) that, following the ideas
of Bedford and Taylor ({\cite{BT}, \cite{BT2}), one can relax the
smoothness requirement on $u$ and develop a non linear version of
potential theory for Hessian operators.

The relevant definition is as follows:
\begin{definition} Let $u$ be a subharmonic function on a
domain $\Om\subset\mathbb C^n$. Then $u$ is called $m$-subharmonic (m-sh for short)
if for any collection of $\mathcal C^2$-smooth m-sh functions
$v_1,\ \cdots,\ v_{m-1}$ the inequality
$$dd^cu\we dd^cv_1\we\cdots\we dd^cv_{m-1}\we\beta^{n-m}\geq 0$$
holds in the weak sense of currents.

The set of all $m$-sh functions is denoted by $ \sh _m
(\Om)$.
\end{definition}
Similarly weak $\om$-m-subharmonicity is defined on compact
K\"ahler manifolds (see \cite{DK} and \cite{Chi}).

Using an approximating sequence of functions one can
follow the Bedford and Taylor construction from \cite{BT2} of the
wedge products of currents given by locally bounded $m$-sh
functions. They are defined inductively by
$$dd^cu_1\we\cdots\we dd^cu_p\we\beta^{n-m}:=dd^c (u_1\we\cdots\we dd^cu_p\we\beta^{n-m}).
$$

 It can be shown (see \cite{Bl}) that,
in analogy to the pluripotential setting, these currents are
continuous under monotone or uniform convergence of their
potentials. Moreover the mixed Hessian measures are  also positive
(\cite{Bl}):
\begin{prop}\label{mix}
Let $u_1,\cdots,u_m$ be bounded $m$-subharmonic functions then the measure
$$dd^cu_1\we\cdots\we dd^cu_m\we\beta^{n-m}$$
is nonnegative.
\end{prop}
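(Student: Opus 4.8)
The assertion is local and unaffected by adding a smooth $m$-sh function to any of the $u_j$, so the first step is to reduce to the smooth case by mollification. Convolving each $u_j$ with a radial mollifier produces a decreasing family $u_j^\varepsilon$ of smooth $m$-sh functions — the defining requirement is that the eigenvalue vector of the complex Hessian lie in the \emph{convex} cone $\bar\Gamma_m$, and this convexity is exactly what makes the condition stable under averaging. By the continuity of the mixed Hessian currents under decreasing sequences of bounded potentials (recalled above, following \cite{Bl}), the measures $\dc u_1^\varepsilon\we\cdots\we\dc u_m^\varepsilon\we\beta^{n-m}$ converge weakly to $\dc u_1\we\cdots\we\dc u_m\we\beta^{n-m}$, so it suffices to treat smooth $u_j$.

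In the smooth case the statement becomes a pointwise fact of linear algebra. Writing $H_j$ for the complex Hessian matrix $\big(\partial^2 u_j/\partial z_p\partial\bar z_q\big)$, a direct computation identifies
$$
\dc u_1\we\cdots\we\dc u_m\we\beta^{n-m}=c_{n,m}\,\sigma_m(H_1,\dots,H_m)\,\beta^n,
$$
where $c_{n,m}>0$ is a combinatorial constant and $\sigma_m(H_1,\dots,H_m)$ denotes the complete polarization of $\sigma_m$, i.e. the unique symmetric $m$-linear form on Hermitian matrices with $\sigma_m(H,\dots,H)=\sigma_m(\lambda(H))$. Thus everything reduces to showing that if Hermitian matrices $H_1,\dots,H_m$ all have eigenvalue vectors in $\bar\Gamma_m$, then $\sigma_m(H_1,\dots,H_m)\ge0$.

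This last inequality is precisely G\aa rding's inequality for hyperbolic polynomials (\cite{Ga}), already invoked above for the convexity of $\Gamma_m$: the polynomial $\sigma_m$ is hyperbolic with respect to the identity, its G\aa rding cone — computed on diagonal matrices and then transported by unitary invariance — is exactly $\{H:\lambda(H)\in\Gamma_m\}$, and G\aa rding's inequality gives $\sigma_m(H_1,\dots,H_m)\ge\prod_j\sigma_m(\lambda(H_j))^{1/m}\ge0$ for $H_j$ in the closed cone. I expect the only genuinely delicate points to be bookkeeping ones: pinning down the constant $c_{n,m}$ together with the exact form-versus-polarization identity, and checking that the \emph{closed} G\aa rding cone of $\sigma_m$ coincides with $\{\lambda(H)\in\bar\Gamma_m\}$ so that the hypotheses of G\aa rding's inequality are met. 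The mollification step is routine, but one should confirm that the monotone-convergence theorem for mixed Hessian currents applies to an arbitrary collection of bounded potentials and not merely to iterates of a single one.
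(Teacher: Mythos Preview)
The paper does not give its own proof of this proposition; it simply attributes the result to B\l ocki \cite{Bl} and states it. Your plan is correct and is essentially the standard argument one finds in that reference: approximate by smooth $m$-sh functions via mollification (legitimate by convexity of $\bar\Gamma_m$, cf.\ Proposition~\ref{aaaa}(3)), pass to the limit using the monotone convergence theorem for mixed Hessian currents, and in the smooth case invoke G\aa rding's inequality for the hyperbolic polynomial $\sigma_m$.
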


Here we list some basic facts about $m$-subharmonicity.
 \begin{prop}\label{aaaa}
 Let $\Om\subset\mathbb C^n$\ be a domain. If $\sh_k (\Om)$ denotes the class of $k$-subharmonic
 functions in $\Om$ then
 \begin{enumerate}
 \item $\mathcal{SH}(\Om)=\sh _1 (\Om)\subset \sh _2 (\Om)\subset\cdots
 \subset \sh _n (\Om)=\mathcal{PSH}(\Om)$,
 \item If $u\in \sh _m (\Om)$\ and $\gamma:\mathbb
 R\rightarrow\mathbb R$\ is a $\mathcal C^2$-smooth convex, increasing
 function then $\gamma\circ u\in \sh _m (\Om)$,
 \item Given any convex model domain $P$ the  $P$-average of a
 $m$-sh function $u$ defined by $u_P (z):=\frac{1}{\int _P \beta ^n}
 \int_P u(z+w)\beta^n (w)$
 function is again $m$-sh on the set where it is defined.
 \end{enumerate}
 \end{prop}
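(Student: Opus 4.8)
The plan is to reduce each of the three assertions to a statement about the G\aa rding cones $\Gamma_m$, proving everything first for $\mathcal C^2$ functions and then passing to the general case by local regularization. Throughout I will use that $\sh_m(\Omega)$ is closed under decreasing and under uniform limits of potentials and that its members are subharmonic, hence locally integrable. Assertion (1) is just the nesting $\bar\Gamma_1\supset\bar\Gamma_2\supset\cdots\supset\bar\Gamma_n$ of the cones — immediate, since $\Gamma_{m+1}$ is carved out of $\Gamma_m$ by the single extra inequality $\sigma_{m+1}>0$ — together with the boundary identifications $\bar\Gamma_1=\{\sigma_1\geq0\}$ and $\bar\Gamma_n=\{\lambda:\lambda_i\geq0\ \forall i\}$; the latter follows from G\aa rding's description of $\Gamma_n$ as the connected component of $\{\sigma_n>0\}$ containing $(1,\dots,1)$, which cannot cross any coordinate hyperplane and so is the open positive orthant. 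A smooth $u$ whose complex Hessian has eigenvalue vector in $\bar\Gamma_{m+1}$ thus has it in $\bar\Gamma_m$; for a general $u\in\sh_{m+1}(\Omega)$ one regularizes locally, $u*\rho_\epsilon\downarrow u$ with $u*\rho_\epsilon$ smooth and $(m+1)$-sh (convolution preserves $m$-subharmonicity, by the Fubini computation of item (3)), hence each $u*\rho_\epsilon$ is smooth $m$-sh, and since $dd^c(u*\rho_\epsilon)\to dd^c u$ weakly while $dd^c v_1\wedge\cdots\wedge dd^c v_{m-1}\wedge\beta^{n-m}$ ($v_i$ smooth $m$-sh) is a fixed smooth form, passing to the limit when testing against a nonnegative function gives $u\in\sh_m(\Omega)$. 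As for the endpoints, $\sh_1=\mathcal{SH}$ is tautological, since $dd^c u\wedge\beta^{n-1}\geq0$ is literally $\Delta u\geq0$, while for $\sh_n=\mathcal{PSH}$ the nontrivial inclusion $\sh_n\subset\mathcal{PSH}$ follows by testing the defining inequality against $dd^c|\ell_1|^2\wedge\cdots\wedge dd^c|\ell_{n-1}|^2$, where $\ell_1,\dots,\ell_{n-1}$ are linear forms whose common zero locus is an arbitrary complex line, which forces the restriction of $dd^c u$ to that line to be nonnegative; I would quote this last point from \cite{Bl}.

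For (2), assume first $u\in\mathcal C^2$. The chain rule gives $dd^c(\gamma\circ u)=\gamma'(u)\,dd^c u+\gamma''(u)\,du\wedge d^c u$. Wedging with $dd^c v_1\wedge\cdots\wedge dd^c v_{m-1}\wedge\beta^{n-m}$ for smooth $m$-sh $v_1,\dots,v_{m-1}$, the first term is nonnegative because $u$ is $m$-sh and $\gamma'\geq0$, and the second because $\gamma''\geq0$ and $du\wedge d^c u\geq0$ is a rank-one positive $(1,1)$-form, so its eigenvalue vector (with respect to $\beta$) lies in the positive orthant $\bar\Gamma_n\subset\bar\Gamma_m$ and the polarized G\aa rding inequality applies — a wedge of $m$ real $(1,1)$-forms whose eigenvalue vectors lie in $\bar\Gamma_m$, multiplied by $\beta^{n-m}$, is a nonnegative $(n,n)$-form (\cite{Ga}, cf.\ \cite{Bl}). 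Since $\gamma\circ u$ is also subharmonic, $\gamma\circ u\in\sh_m(\Omega)$. In general, take smooth $m$-sh $u_j\downarrow u$; then $\gamma\circ u_j$ is smooth $m$-sh and, $\gamma$ being increasing and continuous, decreases to $\gamma\circ u$, so $\gamma\circ u\in\sh_m(\Omega)$.

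For (3), note that the set $\{z:z+\bar P\subset\Omega\}$ is open and that, for each $w\in P$, the translate $u(\cdot+w)$ is $m$-sh near such a $z$, translation invariance being built into the Hessian condition. For a nonnegative test function $\chi$ supported near $z$ and smooth $m$-sh $v_1,\dots,v_{m-1}$, Fubini and the substitution $z\mapsto z-w$ give
$$\int u_P\, dd^c\chi\wedge dd^c v_1\wedge\cdots\wedge\beta^{n-m}=\frac{1}{\int_P\beta^n}\int_P\Big(\int u\; dd^c\big(\chi(\cdot-w)\big)\wedge dd^c\big(v_1(\cdot-w)\big)\wedge\cdots\wedge\beta^{n-m}\Big)\,\beta^n(w),$$
and every inner integral is $\geq0$ since $\chi(\cdot-w)\geq0$, the $v_i(\cdot-w)$ are smooth $m$-sh, and $u\in\sh_m(\Omega)$; hence $dd^c u_P\wedge dd^c v_1\wedge\cdots\wedge\beta^{n-m}\geq0$. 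That $u_P$ is subharmonic (an average of subharmonic functions) and upper semicontinuous (reverse Fatou, $u$ being locally bounded above) is routine, so $u_P\in\sh_m$ wherever it is defined; the same computation with a smooth bump weight in place of the indicator of $P$ is precisely the claim, used in (1), that mollification preserves $m$-subharmonicity.

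I expect the only genuinely delicate step in all of this to be the inclusion $\sh_n\subset\mathcal{PSH}$ in (1) — manufacturing, for each complex direction, smooth plurisubharmonic test functions whose mixed Hessian current isolates that direction; everything else is the nesting of the cones, the chain rule, or Fubini. In this potential-theoretic generality, items (1)–(3) may in any case simply be attributed to \cite{Bl} (see also \cite{Chi}).
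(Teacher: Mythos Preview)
The paper gives no proof of this proposition: it is simply ``listed'' as a basic fact about $m$-subharmonicity, implicitly deferred to \cite{Bl}, and your closing remark that items (1)--(3) ``may in any case simply be attributed to \cite{Bl}'' is exactly the paper's stance. Your detailed argument is correct and is the standard one; note only that you (rightly) establish the chain $\bar\Gamma_1\supset\bar\Gamma_2\supset\cdots\supset\bar\Gamma_n$, which gives $\mathcal{PSH}(\Omega)=\sh_n(\Omega)\subset\cdots\subset\sh_1(\Omega)=\mathcal{SH}(\Omega)$ --- the opposite orientation to the one printed in the paper, which is a typo there.
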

Note that, in particular, if $u$ is nonnegative $m$-sh, and $B(r)$
is the ball $\{z\in\mathbb C^n : ||z||<r\}$
 then
$[u^2]_r(z):=\frac{1}{\int _{B(r)} \beta
^n}\int_{B(r)}u^2(z+w)\beta^n (w)$, is still $m$-subharmonic.

We shall use a similar notation for averaging of differential forms

$$[\alpha ]_r(z):=\frac{1}{\int _{B(r)} \beta
^n}\int_{B(r)} \alpha (z+w) \beta ^n (w) .$$

In the sequel we shall exploit the notion of a {\it maximal} $m$-subharmonic function. These were introduced by B\l ocki in \cite{Bl} as analogues of maximal plurisubharmonic functions:
\begin{definition} An $m$-subharmonic function $u$ on a domain $\Om$ is called maximal if for every $m$-subharmonic function $v$ and any compact set $K\subset\Om$ the inequality $u\geq v$ on $\Om\setminus K$ implies $u\geq v$ on $K$.
\end{definition}

The following proposition from \cite{Bl} characterizes maximality for {\it bounded} $m$-subharmonic functions:
\begin{prop} Let $u$ be a bounded $m$-subharmonic function on a domain $\Om$. Then $u$ is maximal if and only if 
$$(dd^cu)^m\we\beta^{n-m}=0$$
as measures.
\end{prop}
The following two theorems, known as comparison principles in
pluripotential theory, follow essentially from the same arguments
as in the case $m=n$:
\begin{theorem}\label{compprin} Let $u,\ v$ be continuous $m$-sh
functions in a bounded domain $\Om\in\mathbb C^n$. Suppose that
$\liminf_{z\rightarrow\partial\Om} (u-v)(z)\geq0$ then

$$\int_{\lbrace u<v\rbrace}(dd^c v)^{m}\we\beta^{n-m}\leq\int_{\lbrace u<v\rbrace}
(dd^c u)^{m}\we\beta^{n-m}.$$
\end{theorem}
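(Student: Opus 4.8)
The plan is to follow the classical Bedford--Taylor proof of the comparison principle for the complex Monge--Amp\`ere operator (the case $m=n$); the only modification is that every wedge product is taken against the fixed positive form $\beta^{n-m}$, which plays no active role in the integrations by parts.

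First I would reduce to sublevel sets that are compactly contained in $\Om$. The hypothesis $\liminf_{z\to\partial\Om}(u-v)(z)\geq 0$ implies that for every $\ep>0$ the open set $U_\ep:=\{z\in\Om:u(z)<v(z)-\ep\}$ satisfies $U_\ep\Subset\Om$, since $U_\ep\subset\{u-v\leq-\ep/2\}$ and the latter set cannot accumulate on $\partial\Om$. As $(dd^cv)^m\we\beta^{n-m}$ is a positive Borel measure and $U_\ep\uparrow\{u<v\}$ when $\ep\downarrow 0$, monotone convergence reduces the theorem to proving $\int_{U_\ep}(dd^cv)^m\we\beta^{n-m}\leq\int_{\{u<v\}}(dd^cu)^m\we\beta^{n-m}$ for each fixed $\ep>0$.

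Fixing $\ep$, I would introduce $u_\ep:=\max(u,v-\ep)$, which is continuous and $m$-sh (a maximum of two $m$-sh functions), and which agrees with $u$ outside $U_\ep$; in particular $\{u_\ep\neq u\}\Subset\Om$, so one may pick a domain $\Om'$ with $\{u_\ep\neq u\}\Subset\Om'\Subset\Om$. On $\Om'$ all potentials are bounded, and the key computation is the telescoping identity
$$(dd^cu_\ep)^m\we\beta^{n-m}-(dd^cu)^m\we\beta^{n-m}=dd^c(u_\ep-u)\we\sum_{k=0}^{m-1}(dd^cu_\ep)^k\we(dd^cu)^{m-1-k}\we\beta^{n-m},$$
valid in the sense of currents. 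Since $u_\ep-u$ has compact support in $\Om'$ and the currents $(dd^cu_\ep)^k\we(dd^cu)^{m-1-k}\we\beta^{n-m}$ are closed, integrating over $\Om'$ and moving the $dd^c$ off $u_\ep-u$ by Stokes gives $\int_{\Om'}(dd^cu_\ep)^m\we\beta^{n-m}=\int_{\Om'}(dd^cu)^m\we\beta^{n-m}$. Using locality of the Hessian measures, namely that $u_\ep=v-\ep$ on the open set $U_\ep$ and $u_\ep=u$ on the open set $\{u>v-\ep\}$, I would split the left-hand side over $U_\ep$, $\{u=v-\ep\}$ and $\{u>v-\ep\}$, discard the nonnegative mass carried by $\{u=v-\ep\}$, and rearrange (all quantities involved being finite, as $\Om'\Subset\Om$) to get $\int_{U_\ep}(dd^cv)^m\we\beta^{n-m}\leq\int_{\{u\leq v-\ep\}}(dd^cu)^m\we\beta^{n-m}\leq\int_{\{u<v\}}(dd^cu)^m\we\beta^{n-m}$. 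Letting $\ep\downarrow0$ then completes the argument.

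The step needing the most care is the Stokes/integration-by-parts identity $\int_{\Om'}(dd^cu_\ep)^m\we\beta^{n-m}=\int_{\Om'}(dd^cu)^m\we\beta^{n-m}$ together with the locality of the wedge products: one must know that $dd^c$ of bounded (here merely continuous) $m$-sh functions can be multiplied to produce well-defined closed positive currents that are local and for which integration by parts is legitimate, which is justified by approximation. This is precisely B\l ocki's adaptation of the Bedford--Taylor machinery recalled in the Preliminaries, and once it is in place the remainder of the argument is formal.
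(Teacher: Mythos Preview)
Your proposal is correct and is precisely the argument the paper has in mind: the paper does not give a proof of this theorem at all, merely stating that it ``follow[s] essentially from the same arguments as in the case $m=n$,'' i.e.\ the Bedford--Taylor comparison principle. Your write-up supplies exactly that classical argument (sublevel-set reduction, $u_\ep=\max(u,v-\ep)$, the telescoping identity, Stokes, and locality of the Hessian measures), with the inert factor $\beta^{n-m}$ carried along, and correctly flags that the only nontrivial input is the Bedford--Taylor/B\l ocki machinery for wedge products of bounded $m$-sh functions already recalled in the Preliminaries.
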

\begin{theorem}\label{partialcompprin}  Let $u,\ v,\ w_1,\cdots,w_{m-1}$ be
continuous $m$-sh functions in a bounded domain $\Om\in\mathbb
C^n$. Suppose that
$\liminf_{z\rightarrow\partial\Om}(u-v)(z)\geq0$ then
\begin{align*}&\int_{\lbrace u<v\rbrace} dd^c v \we dd^cw_1\we\cdots\we dd^cw_{m-1}
\we\beta^{n-m}\\
&\leq\int_{\lbrace u<v\rbrace} dd^c u\we dd^cw_1\we\cdots\we
dd^cw_{m-1}\we\beta^{n-m}.
\end{align*}
\end{theorem}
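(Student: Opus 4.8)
The plan is to reproduce, in the $m$-subharmonic setting, the classical Bedford--Taylor proof of the comparison principle (\cite{BT}, \cite{BT2}, and B\l ocki's adaptation \cite{Bl}), exploiting two structural features of the statement: the integrand is linear in the distinguished potential, and $T:=dd^cw_1\wedge\cdots\wedge dd^cw_{m-1}\wedge\beta^{n-m}$ is a \emph{closed} positive current of bidimension $(1,1)$ --- closed because each $dd^cw_i$ and $\beta$ is closed, positive by the Bedford--Taylor--B\l ocki fact underlying Proposition~\ref{mix}. First I would localise: fix $\epsilon>0$ and set $v_\epsilon:=\max(u,v-\epsilon)$, again a continuous $m$-sh function since $\mathcal{SH}_m(\Omega)$ is closed under maxima. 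The boundary hypothesis $\liminf_{z\to\partial\Omega}(u-v)(z)\geq0$ forces $u>v-\epsilon$ near $\partial\Omega$, so $g_\epsilon:=v_\epsilon-u=\max(v-\epsilon-u,0)$ is a nonnegative continuous function --- a difference of bounded $m$-sh functions --- with compact support contained in some $K\Subset\Omega$.

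The key analytic input is the Bedford--Taylor-type Stokes formula: since $g_\epsilon$ is compactly supported and $T$ is closed, the signed measure $dd^cg_\epsilon\wedge T:=dd^cv_\epsilon\wedge T-dd^cu\wedge T$ has total mass zero on $\Omega$. Set $A:=\{u<v-\epsilon\}$, $B:=\{u=v-\epsilon\}$ and $C:=\{u>v-\epsilon\}$; by continuity $A$ and $C$ are open, while $A$ and $B$ are relatively compact in $\Omega$. The locality of the mixed Hessian operators gives $dd^cv_\epsilon\wedge T=dd^cv\wedge T$ over $A$ (where $v_\epsilon=v-\epsilon$ locally) and $dd^cv_\epsilon\wedge T=dd^cu\wedge T$ over $C$ (where $v_\epsilon=u$ locally), so $dd^cg_\epsilon\wedge T$ is carried by $A\cup B$ and the vanishing of its mass reads
$$\big(dd^cv\wedge T-dd^cu\wedge T\big)(A)+\big(dd^cv_\epsilon\wedge T-dd^cu\wedge T\big)(B)=0.$$

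Discarding the nonnegative term $(dd^cv_\epsilon\wedge T)(B)$ in this identity yields
$$\int_{\{u<v-\epsilon\}}dd^cv\wedge T\ \leq\ \int_{\{u\leq v-\epsilon\}}dd^cu\wedge T .$$
Letting $\epsilon\downarrow0$, the sets $\{u<v-\epsilon\}$ and $\{u\leq v-\epsilon\}$ both increase to $\{u<v\}$, so continuity from below of the positive measures $dd^cv\wedge T$ and $dd^cu\wedge T$ promotes the last inequality to the asserted
$$\int_{\{u<v\}}dd^cv\wedge T\ \leq\ \int_{\{u<v\}}dd^cu\wedge T .$$

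I expect the only genuinely delicate point to be the Stokes formula used in the second step --- both the interpretation of $dd^cg_\epsilon\wedge T$ as a signed Borel measure of finite mass and the vanishing of that mass for the compactly supported, merely continuous potential $g_\epsilon$. This, however, is precisely the step that runs exactly as in the case $m=n$, so it can be quoted from \cite{BT2} together with its $m$-subharmonic version \cite{Bl} rather than reproved; one has only to keep track of the fact that $T$ carries exactly $m-1$ complex Hessian factors, so that every product appearing above stays within the range in which the Bedford--Taylor--B\l ocki construction and its positivity, continuity and locality properties are available. The identical scheme, applied to $(dd^cv_\epsilon)^m\wedge\beta^{n-m}$ and $(dd^cu)^m\wedge\beta^{n-m}$, gives the full comparison principle of Theorem~\ref{compprin} as well.
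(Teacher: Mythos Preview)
Your proposal is correct and is precisely the classical Bedford--Taylor argument that the paper is alluding to when it states that Theorems~\ref{compprin} and~\ref{partialcompprin} ``follow essentially from the same arguments as in the case $m=n$''; the paper itself gives no further details beyond this remark. Your identification of the one point requiring care --- the Stokes/integration-by-parts step for the compactly supported continuous difference $g_\epsilon$ against the closed positive current $T$ --- and your observation that it is exactly the $m=n$ argument from \cite{BT2}, adapted via \cite{Bl}, matches the paper's intent.
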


On a compact K\"ahler manifold $(X,\om)$ the solvability of the
Hessian equation (for $1<m<n$)
\begin{equation}\label{hessian}
\begin{cases}u\ is\ \om-m\ {\rm subharmonic};\\ (\om+dd^cu)^{m}\we\om^{n-m}=f\om^n
\\ sup_Xu=0,\end{cases}
\end{equation}
with a strictly positve smooth function $f$ satisfying the
necessary conditon $\int_Xf\om^n=\int_X\om^n$ has attracted
recently a lot of interest.  It should be mentioned that the case $m=1$ corresponds
to the standard Laplace equation, while the solvability in the
case $m=n$ is guearanteed by the Calabi-Yau theorem. In the remaining cases $1<m<n$ the equation was solved  under a
curvature assumption (\cite{Hou}, \cite{J}, \cite{Kok}) or under
additional convexity assumptions for $u$ itself (\cite{G},
\cite{Z}). The continuity method applied in the solution of the
Calabi conjecture by S. T. Yau (\cite{Y}) is already known to
work, up to a point, without any extra assumptions. Let us state
the relevant ingredients:
\begin{enumerate}
\item The openness part for the continuity method works (\cite{Hou});
\item The uniform estimate needed for the closedness part was furnished by Hou
(\cite{Hou}). Alternative approaches were found in \cite{DK} and
\cite{Chi};
\item The $\mathcal C^{2+\alpha}$ a priori estimate for $u$ follows from standard
Evans-Krylov theory as it was observed by Hou (\cite{Hou});
\item The $\mathcal C^2$ a priori estimate holds provided there is an a priori
gradient bound (\cite{HMW}).
\end{enumerate}
In particular the only remaining part to accomplish the program
was the gradient a priori estimate. In fact the Hou-Ma-Wu
\cite{HMW} result yields even more and it will be used in our
argument  below.
\begin{theorem}\label{hmw}
If $u\in \mathcal C^4(X)$ solves the equation (\ref{hessian}) then the following
$\mathcal C^2$ a priori estimate holds
\begin{equation}
sup_X||dd^cu||_{\om}\leq C(sup_X||\nabla u||^2+1),
\end{equation}
where $C$ is a constant dependent on $||f^{1/m}||_{\mathcal C^2(X)},\ m,\ n ,\ ||u||_{\infty}$
and the bound on the bisectional curvature of $(X,\om)$.
\end{theorem}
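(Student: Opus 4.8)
The plan is to prove the estimate by applying the maximum principle to the largest eigenvalue of the complex Hessian of $u$, which is the route to second order bounds for equations of Hessian type pursued by Hou-Ma-Wu. Denote by $\lambda_1\ge\cdots\ge\lambda_n$ the eigenvalues of $\om+\dc u$ with respect to $\om$; by hypothesis the vector $\lambda=(\lambda_1,\dots,\lambda_n)$ lies in $\bar\Gamma_m$ at every point, and since $\|\dc u\|_\om$ and $\lambda_1$ agree up to an additive dimensional constant, it suffices to bound $\sup_X\lambda_1$. It is convenient to rewrite the equation in the G\aa rding-normalised form
$$ G(\om+\dc u):=\sigma_m(\lambda)^{1/m}=c_{n,m}\,f^{1/m}; $$
by the G\aa rding theorem $G$ is concave in the entries of the complex Hessian on $\bar\Gamma_m$ and positively homogeneous of degree one. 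Let $F^{i\bar j}:=\partial G/\partial(\om+\dc u)_{i\bar j}$ and let $\mathcal L:=\sum_{i,j}F^{i\bar j}\,\p_i\p_{\bar j}$ denote the linearisation; along the solution $(F^{i\bar j})$ is positive semidefinite, and strictly positive wherever $\lambda$ is in the open cone. Writing $\mathcal F:=\sum_iF^{i\bar i}$ in a local frame in which $\om$ is the identity, one has $\mathcal F\ge c(n,m)>0$, a standard consequence of Newton's inequalities on $\Gamma_m$.

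I would apply the maximum principle to the test function
$$ W:=\log\lambda_1+\varphi(\|\nabla u\|_\om^2)+\psi(u), $$
where $M:=\sup_X\|\nabla u\|_\om$ (finite by hypothesis, and the quantity appearing on the right-hand side of the asserted bound), $\varphi(s):=-\tfrac12\log(2-s/M^2)$, and $\psi(t):=-\tfrac12\log(\|u\|_\infty+1+t)$. The point of $\varphi$ is that on the range $s\in[0,M^2]$ one has $\varphi'>0$, $\varphi''=2(\varphi')^2$, $\varphi'\asymp M^{-2}$ and $|\varphi|\le\tfrac12\log2$, and of $\psi$ that on the range $t\in[-\|u\|_\infty,0]$ one has $\psi'<0$, $\psi''=2(\psi')^2$ and $|\psi|\le C(\|u\|_\infty)$. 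Since $\varphi$ and $\psi$ are bounded independently of $M$, a bound on $\lambda_1$ at the maximum point $x_0$ of $W$ will propagate to $\sup_X\lambda_1\le C(M^2+1)$. Where the top eigenvalue is not simple $\lambda_1$ is only Lipschitz; I would first carry out the computation assuming $\lambda_1(x_0)$ is simple and then dispose of the general case by the usual perturbation of the Hessian at $x_0$, and I would fix normal coordinates for $\om$ at $x_0$ in which $(\p_i\p_{\bar j}u)(x_0)$, and hence $(F^{i\bar j})(x_0)$, is diagonal.

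At $x_0$ one has $\nabla W=0$ and $\mathcal LW\le0$. Expanding $\mathcal L(\log\lambda_1)$ via the first and second variation of a simple eigenvalue yields $\lambda_1^{-1}\sum_iF^{i\bar i}\,\p_i\p_{\bar i}(\p_1\p_{\bar1}u)$, a nonnegative third order term $T_1=\lambda_1^{-1}\sum_iF^{i\bar i}\sum_{p>1}\tfrac{|\p_i\p_1\p_{\bar p}u|^2+|\p_i\p_{\bar1}\p_pu|^2}{\lambda_1-\lambda_p}\ge0$, the term $-\lambda_1^{-2}\sum_iF^{i\bar i}|\p_i\lambda_1|^2$, and curvature terms of size $O(\mathcal F)$. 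Differentiating $G(\om+\dc u)=c_{n,m}f^{1/m}$ twice at $x_0$ and using the concavity of $G$ bounds $\sum_iF^{i\bar i}\,\p_i\p_{\bar i}(\p_1\p_{\bar1}u)$ below by $-C-C\lambda_1\mathcal F-C\sum_iF^{i\bar i}|\lambda_i|$, the terms involving $\lambda$ arising from commuting covariant derivatives, i.e. from the curvature of $\om$. The term $\varphi'\,\mathcal L(\|\nabla u\|_\om^2)$ produces the decisive good term $\varphi'\sum_iF^{i\bar i}\lambda_i^2$ (from $F^{i\bar j}\,\p_i\p_{\bar k}u\,\overline{\p_j\p_{\bar k}u}$), the nonnegative term $\varphi'\sum_{i,k}F^{i\bar i}|\p_i\p_ku|^2$, third order terms absorbed into $\varphi''\sum_iF^{i\bar i}|\p_i\|\nabla u\|_\om^2|^2$ once $\nabla W=0$ is invoked, and further terms bounded by $C\mathcal F+CM^2\mathcal F$ (here one uses the once-differentiated equation at $x_0$ and $\varphi'\asymp M^{-2}$). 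Using $\nabla W=0$ to write $\p_i\lambda_1/\lambda_1=-\varphi'\p_i\|\nabla u\|_\om^2-\psi'\p_iu$ makes $-\lambda_1^{-2}\sum F^{i\bar i}|\p_i\lambda_1|^2$ together with $\varphi''\sum F^{i\bar i}|\p_i\|\nabla u\|_\om^2|^2$ nonnegative (this is where $\varphi''\ge2(\varphi')^2$ enters), at the cost of $-2(\psi')^2\sum F^{i\bar i}|\p_iu|^2$, which is cancelled by $\psi''\sum F^{i\bar i}|\p_iu|^2$ since $\psi''=2(\psi')^2$. Finally Euler's relation gives $\mathcal Lu=\sum_iF^{i\bar i}\p_i\p_{\bar i}u=c_{n,m}f^{1/m}-\mathcal F$, so $\psi'\,\mathcal Lu=|\psi'|\mathcal F-|\psi'|c_{n,m}f^{1/m}\ge-C$. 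Collecting, dropping $T_1\ge0$ and the other nonnegative terms except $\varphi'F^{1\bar1}\lambda_1^2$, using $\sum_iF^{i\bar i}|\lambda_i|\le(\mathcal F\sum_iF^{i\bar i}\lambda_i^2)^{1/2}$ with Young's inequality to absorb the curvature contributions, and using $\mathcal F\ge c(n,m)$, one arrives at an inequality of the form
$$ 0\ \ge\ \varphi'\,F^{1\bar1}\,\lambda_1^2\ -\ C(1+M^2)\,\mathcal F , $$
with $C$ independent of $M$ and $\lambda_1$. If $F^{1\bar1}\ge\epsilon\mathcal F$ for some dimensional constant $\epsilon>0$, then, since $\varphi'\ge 1/(4M^2)$ and $\mathcal F>0$, this gives $\lambda_1(x_0)^2\le(4C/\epsilon)M^2(1+M^2)\le(4C/\epsilon)(1+M^2)^2$, hence $\lambda_1(x_0)\le C'(1+M^2)$, and then $W$ propagates the bound to $\sup_X\lambda_1$; this settles the case $F^{1\bar1}\ge\epsilon\mathcal F$.

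The remaining possibility is that at $x_0$ one has $F^{1\bar1}<\epsilon\mathcal F$ for every $\epsilon>0$, and this is the genuine obstacle, specific to the Hessian case $1<m<n$ and absent for Monge-Amp\`ere. Indeed $F^{1\bar1}$ is proportional to $\sigma_{m-1}(\lambda|1)$, the value of $\sigma_{m-1}$ with the largest eigenvalue deleted, which is the \emph{smallest} of the $\sigma_{m-1}(\lambda|i)$; it can be an arbitrarily small fraction of $\mathcal F$ precisely when the negative part of the spectrum of $\dc u$ is strongly concentrated. The resolution, due to Hou-Ma-Wu, is a dichotomy for vectors of $\Gamma_m$: after fixing a small dimensional $\epsilon$, either $F^{1\bar1}\ge\epsilon\mathcal F$ — handled above — or $F^{1\bar1}<\epsilon\mathcal F$, in which case one proves, from the combinatorics of elementary symmetric functions on $\Gamma_m$, that $\sum_iF^{i\bar i}\lambda_i^2$ together with the retained third order term $T_1$ still dominates all the error terms: smallness of $F^{1\bar1}/\mathcal F$ forces a controlled family of eigenvalues $\lambda_p$ to be comparable in size to $\lambda_1$, so that $\lambda_1-\lambda_p\asymp\lambda_1$ and $T_1$ can be used effectively, and then the same elementary estimate as before yields $\lambda_1(x_0)\le C(M^2+1)$. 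Feeding this dichotomy into the computation completes the proof, with the constant $C$ depending on $n$, $m$, $\|f^{1/m}\|_{\mathcal C^2}$ and the bound on the bisectional curvature of $\om$ (the latter entering only through the commutator terms above). The main work, and the only genuinely non-routine step, is the $\Gamma_m$-dichotomy that handles the case of a small leading coefficient; everything else is the standard maximum principle computation.
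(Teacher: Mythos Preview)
The paper does not prove this theorem: Theorem~\ref{hmw} is quoted from Hou--Ma--Wu \cite{HMW} as a known input (see the list of ingredients preceding the statement and the attribution ``the Hou--Ma--Wu \cite{HMW} result''). There is therefore no in-paper proof to compare your proposal against.

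What you have written is a faithful outline of the Hou--Ma--Wu maximum-principle argument itself: the same test function $\log\lambda_1+\varphi(|\nabla u|^2)+\psi(u)$ with $\varphi''=2(\varphi')^2$, $\psi''=2(\psi')^2$, the concavity of $G=\sigma_m^{1/m}$, and the eigenvalue dichotomy on $\Gamma_m$ that handles the small-$F^{1\bar1}$ case. As a sketch it is accurate in spirit; the one place where your write-up is genuinely incomplete rather than merely compressed is the second branch of the dichotomy. Your sentence ``smallness of $F^{1\bar1}/\mathcal F$ forces a controlled family of eigenvalues $\lambda_p$ to be comparable in size to $\lambda_1$, so that $\lambda_1-\lambda_p\asymp\lambda_1$ and $T_1$ can be used effectively'' is the heart of \cite{HMW} and requires a nontrivial algebraic lemma about $\sigma_{m-1}(\lambda|i)$ on $\Gamma_m$; without it the inequality you display does not close. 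If your intent is to reproduce the proof rather than cite it, that lemma needs to be stated and proved; if your intent is only to invoke the result, a citation to \cite{HMW} (as the present paper does) is sufficient.
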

In particular, as mentioned by the authors in \cite{HMW}, this
type of estimate is amenable to blow-up analysis which reduces the
gradient bound to a certain Liouville type theorem for global
$m$-sh functions on $\mathbb C^n$.
\section{The gradient estimate for the complex Hessian equation - reduction to a Liouville
type theorem}
Although it is pretty standard, we present the details of this
reduction for the sake of completeness.

Suppose that on some compact K\"ahler manifold $(X,\om)$ the gradient estimate fails.
By definiton one then finds a sequence of $\mathcal C^4$ smooth $\om-m$ sh functions
$u_j$ solving the problems
\begin{equation}\label{hessianj}
\begin{cases}(\om+dd^cu_j)^{m}\we\om^{n-m}=f_j\om^n\\ sup_Xu_j=0,\end{cases}
\end{equation}
with some strictly positive functions $f_j$ satisfying
$||f_j ^{1/m}||_{\mathcal C^2(X)}\leq C$ for some uniform constant
$C$ (in particular the uniform norm of $f_j$ is also under control
and  so is the uniform norm of $u_j$ (\cite{Hou}, \cite{DK})), yet
$C_j:=sup_X|\nabla u_j|\rightarrow\infty$.

After passing to a subsequence one may assume that the points
maximizing the functions $||\nabla u_j||$ lie
 in a fixed coordinate chart  and that they converge to the coordinate center.
 We may
further assume that $\om:=i\sum_{k,j=1}^ng_{j\bar{k}}dz_j\we
d\bar{z}_k$  is equal to $dd^cv$ for some local smooth
plurisubharmonic function (the local potential of $\om$).

Changing the coordinates and shrinking the chart if needed one may
further assume that $\om(0)=\beta$ and $\om(z)=\beta +
O(||z||^2).$ Then a subsequence (which we shall still denote by
$u_j$)  has  maximum points of the modulus of the gradient
contained in the chart and converging to the center. Let us denote
these maximum points by $z_k$. Without loss of generality we
assume that the ball of radius $2$ in the introduced coordinates
is contained in the chart.

Consider now the functions $\hat{u}_j(z):=u_j(z_j+\frac{z}{C_j})$.
By construction $\hat{u}_j$ is a $\mathcal C^4$ smooth function on
the ball $\mathbb B(0,C_j):=\{z\in\mathbb C^n : ||z||<C_j\}$ with
$sup_{\mathbb B(0,C_j)}||\nabla \hat{u}_j||=1+O(\frac{1}{C_j})$.
Moreover 
$$sup_{\mathbb
B(0,C_j)}||dd^c\hat{u}_j||\leq\frac{1}{C_j^2}sup_X||dd^cu_j||_{\om}\leq
C$$ 
by Theorem \ref{hmw}. Thus for any $\alpha\in(0,1)$ the family
$\{\hat{u}_j\}_{j\geq k}$ is relatively compact in the $\mathcal
C^{1,\alpha}$ topology on the domain $\mathbb B(0,C_k)$. Then, by
a standard diagonalization procedure and the Arzela-Ascoli theorem
one can extract a limiting function $u\in\mathcal
C^{1,\alpha}(\mathbb C^n)$, such that $||\nabla u(0)||=1$. In
particular $u$ is not constant.

Observe however that in the introduced coordinates (\ref{hessianj}) becomes
$$(dd^c (v+u_j)(z))^m\we (dd^c v(z))^{n-m}=f_j(z)(dd^c v(z))^n.$$
Thus $\hat{u}_j$ satisfies the equation
\begin{align*}(\frac{1}{C_j^2}dd^c v(z_j+z/C_j)&+dd^c\hat{u}_j(z)
) ^{m}\we
(\frac{1}{C_j^2}dd^c v(z_j+z/C_j))^{n-m}\\
&=f_j(z_j+z/C_j) (\frac{1}{C_j^2}dd^c v (z_j+z/C_j))^{n}.
\end{align*}
Since $dd^c v(z)=\beta +
O(||z||^2)$ near zero, we have for $j$ large from  the latter equality  
\begin{equation}\label{approx}[O(1/C_j^2)\beta+dd^c\hat{u}_j]^{m}
\we [(O(1/C_j^2))\beta]^{n-m}= O(1/C_j^{2n})\beta^n.
\end{equation}
More generally for any $1\leq k\leq m$ a similar reasoning coupled
with the $\om-m$ subharmonicity of $u_j$ gives
$$[O(1/C_j^2)\beta+dd^c\hat{u}_j]^{k}\we [(O(1/C_j^2))\beta]^{n-k}\geq 0.$$

These inequalities tell us that the limiting function $u$ is
$m$-sh. Then the equalities  (\ref{approx}) can be read also in
the pluripotential sense and thus one can extract the weak limit
satisfying
\begin{equation}\label{maximal}
(dd^cu)^m\we\beta^{n-m}=0.
\end{equation}
In particular, this means
that $u$ is a maximal $m$-sh function in $\mathbb C^n$. 

Thus we have constructed a uniformly bounded maximal nonconstant
$m$-sh function which is, in addition, globally $\mathcal
C^{1,\alpha}$ smooth with all $\mathcal C^{1,\alpha}$ norms under
control. In particular the gradient is uniformly bounded. Thus a
Liouville type theorem saying that no such function exists would
yield a contradiciton, and hence a (nonexplicit) a priori gradient
estimate.
\section{Liouville theorem for maximal $m$-subharmonic functions}
Any bounded  entire $m$-subharmonic function is in particular
subharmonic. While bounded subharmonic functions do exist in
$\mathbb C^n,\ n\geq 2$, they all must have controlled behavior at
infinity for {\it almost all values} as the
 next theorem shows.
\begin{theorem}[Cartan's lemma, see \cite{HK}]
Let $u$ be a bounded subharmonic function in $\mathbb C^n,\ n\geq
2$. Fix any $q>2n-2$. If $a=sup_{\mathbb C^n}u$, then
$$lim_{||x||\rightarrow\infty, x\in\mathbb C^n\setminus A}u(x)=a,$$
where the set $A$ is contained in, at most, countable collection
of balls $\mathbb B(x_k,r_k)$ such that
$\sum_{k=1}^{\infty}(\frac{r_k}{||x_k||})^q<\infty.$
\end{theorem}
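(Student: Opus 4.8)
The plan is to pass to the Riesz potential of $u$ and translate the desired convergence into a decay statement for a Newtonian potential, to which a dyadic version of the classical Cartan covering lemma applies. Since $u$ is subharmonic and bounded in $\mathbb R^{2n}$ with $2n\geq 4$, its spherical means $m(r)$ over the spheres $\partial\mathbb B(0,r)$ are bounded and non-decreasing and satisfy $m'(r)=c_n\,r^{1-2n}\mu(\mathbb B(0,r))$, where $\mu=\Delta u\geq 0$ is the Riesz measure and $c_n>0$. Boundedness of $m$ gives the key integrability $\int_1^\infty r^{1-2n}\mu(\mathbb B(0,r))\,dr<\infty$, and, after an integration by parts via Fubini, $\int_{\{|y|>1\}}|y|^{2-2n}\,d\mu(y)<\infty$. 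Because bounded harmonic functions on $\mathbb R^{2n}$ are constant, the least harmonic majorant of $u$ equals the constant $a$, so the Riesz decomposition yields $a-u(x)=c_n\,p(x)\geq 0$, where $p(x):=\int|x-y|^{2-2n}\,d\mu(y)$ is the Newtonian potential of $\mu$ (genuinely a potential thanks to the integrability just derived). Thus the theorem is equivalent to showing $p(x)\to 0$ as $|x|\to\infty$ off a set $A$ with the stated covering.

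First I would dispose of the non-resonant part of $p$. Splitting $\mu$ according to the scale of $R=|x|$ into interior mass $\{|y|<R/2\}$, far mass $\{|y|\geq 2R\}$, and annular mass $\{R/2\leq |y|<2R\}$, the first two contributions are harmless: for $|y|<R/2$ one has $|x-y|\geq R/2$, and the integrability above forces $R^{2-2n}\mu(\mathbb B(0,R))\to 0$; for $|y|\geq 2R$ one has $|x-y|\geq |y|/2$, so that part is bounded by $2^{2n-2}\int_{\{|y|\geq 2R\}}|y|^{2-2n}\,d\mu\to 0$. Both bounds are uniform in the direction of $x$, hence tend to $0$ for every $x$ with $|x|\to\infty$; so the exceptional set can only be produced by the annular (resonant) part.

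The heart of the matter is covering the bad set of the annular potential. Fix dyadic shells $S_j=\{2^j\leq|y|<2^{j+1}\}$, put $m_j=\mu(S_j)$, and observe that the integrability of $\int_{\{|y|>1\}}|y|^{2-2n}\,d\mu$ is precisely $\sum_j 2^{-j(2n-2)}m_j<\infty$. For $x$ in the annulus $2^j\leq|x|<2^{j+1}$ the resonant mass lies in $\nu_j:=\mu|_{S_{j-1}\cup S_j\cup S_{j+1}}$, of total mass $M_j=m_{j-1}+m_j+m_{j+1}$. I would then invoke the classical finite-mass Cartan lemma: the super-level set $\{x:\int|x-y|^{2-2n}\,d\nu_j(y)>h_j\}$ is covered by balls $\mathbb B(x^{(j)}_k,r^{(j)}_k)$ with $\sum_k (r^{(j)}_k)^{2n-2}\leq C\,M_j/h_j$, where, upon retaining only the portion meeting the annulus, the radii satisfy $r^{(j)}_k\lesssim 2^j$ and the centres $|x^{(j)}_k|\asymp 2^j$. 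Trading the extra power then gives $\sum_k (r^{(j)}_k)^{q}\lesssim 2^{j(q-(2n-2))}M_j/h_j$, whence $\sum_k\big(r^{(j)}_k/|x^{(j)}_k|\big)^{q}\lesssim 2^{-j(2n-2)}M_j/h_j$.

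It remains to choose the thresholds $h_j$ so that simultaneously $h_j\to 0$ (to force $p\to 0$ off $A$) and $\sum_j 2^{-j(2n-2)}M_j/h_j<\infty$ (to secure $\sum_{j,k}(r^{(j)}_k/|x^{(j)}_k|)^q<\infty$). Writing $b_j:=2^{-j(2n-2)}M_j$, which is summable, I would set $h_j:=\sqrt{B_j}$ with $B_j:=\sum_{i\geq j}b_i\to 0$; the elementary inequality $b_j/\sqrt{B_j}\leq 2(\sqrt{B_j}-\sqrt{B_{j+1}})$ telescopes to give $\sum_j b_j/h_j<\infty$ while $h_j\to 0$. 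Taking $A:=\bigcup_{j\geq j_0,\,k}\mathbb B(x^{(j)}_k,r^{(j)}_k)$ then completes the proof. I expect this balancing — forcing the thresholds to decay while keeping the weighted radius sum finite — to be the crux. The strict inequality $q>2n-2$ enters one step earlier and decisively: it is only because $q>2n-2$ together with the scale bound $r^{(j)}_k\lesssim 2^j$ that $\sum_k (r^{(j)}_k)^q$ can be dominated by the Newtonian sum $\sum_k (r^{(j)}_k)^{2n-2}$ supplied by Cartan's lemma. The finite-mass Cartan covering itself is the standard building block (a Vitali/stopping-scale argument, cf.\ \cite{HK}).
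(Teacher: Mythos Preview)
The paper does not prove this statement; it is quoted from \cite{HK} as a classical result and used only as a black box in the proof of the Liouville theorem. Your argument is a correct and essentially complete outline of the standard proof: the global Riesz decomposition $a-u=c_n\,p$ (available because $u$ is bounded and $2n\geq 3$, so the least harmonic majorant is the constant $a$ by Liouville) reduces the claim to decay of the Newtonian potential $p$ off a thin set; the interior and far pieces of $p$ are disposed of by the integrability $\int_{\{|y|>1\}}|y|^{2-2n}\,d\mu<\infty$, and the resonant piece on each dyadic shell is handled by the finite-mass Cartan covering lemma with thresholds $h_j\to 0$. Your tail choice $h_j=\sqrt{B_j}$ together with the telescoping bound $b_j/\sqrt{B_j}\leq 2(\sqrt{B_j}-\sqrt{B_{j+1}})$ is precisely the standard device for balancing $h_j\to 0$ against $\sum b_j/h_j<\infty$, and you have correctly isolated the one place where the strict inequality $q>2n-2$ is needed, namely in passing from $\sum_k (r^{(j)}_k)^{2n-2}$ to $\sum_k (r^{(j)}_k)^{q}$ via the scale bound $r^{(j)}_k\lesssim 2^j$.
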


In particular if $u$ is a bounded nonnegative $m$-sh function with
$1=sup_{\mathbb C^n}u$ then for any $z\in\mathbb C^n$
$lim_{r\rightarrow\infty}[u^2]_r (z)=lim_{r\rightarrow\infty}[u]_r
(z)=1$. This fact could have been proven using more elementary
Harnack inequalities, but by referring to Cartan's lemma we make
the argument shorter.

In what follows a function with bounded gradient is a globally Lipshitz function whose Lipshitz constant is under control.

\begin{theorem}[Liouville type theorem for $m$-subharmonic functions]
 Any  $u$ bounded,  m-sh maximal function in $\Bbb C ^n$ with
bounded gradient is constant.
\end{theorem}
\begin{proof}
Arguing by contradiction assume $u$ is nonconstant and $\inf u =0,
\ \sup u =1$ and $||\nabla u|| <c_0 .$

We use induction over $n$. Note that for $n=1$ and $m=n$ the
statement is known.

First suppose that $u$ has the following property: there exists
$\rho >0$, a sequence of mappings $G_k$, each a composition of a
translation and a complex rotation, and a sequence of  balls $ B_k
= B(0 , r_k),\ \ r_k \to\infty$  such that
\begin{equation}\label{1}
[u^2\circ G_k ]_{r_k} (0) + [u\circ G_k ]_{\rho} (0) -2u\circ G_k
(0) \geq 4/3,
\end{equation}
 where we use the notation
$$ [v]_r (z) =  \frac{1}{Vol(B(z,r))}\int_{B(z,r)}v\, \beta ^n
,$$
 $Vol (E)=\int _E \beta ^n$; and
\begin{equation}\label{2}
\lim _{k\to \infty} \int _{B_k}|\frac{\p  u\circ G_k }{\p z_1 }|^2
\beta ^n =0 .
\end{equation}
The functions
$$
u_k =u\circ G_k ,
$$
 are maximal $m$-subharmonic. By the Arzela-Ascoli theorem and a diagonalisation argument one can choose a
subsequence converging locally uniformly to an entire m-sh
function $v$. This function is also maximal by the convergence
theorem \cite{Bl}. From (\ref{2}), and the bound on $||\nabla u||$
it follows that $v$ is constant along the lines with  fixed $z' =
(z_2 , ... , z_n )$.

Indeed, suppose that for some complex  numbers $a,b$ and positive
$c$ we had  $v(a,z_0 ') >v(b,z_0 ') +2c$. Then, by $|\nabla v|<
c_0$, for $\delta = c/(4c_0 )$
\begin{align}
&\inf \{ v(z_1,z'):|z_1 -a|<\delta , |z' -z_0 '|<\delta \} \nonumber \\
>& \sup \{ v(z_1,z'): |z_1 -b|<\delta , |z' -z_0 '|<\delta \} +c .\nonumber
\end{align}
Integrating the directional derivative of $v$ along intervals
parallel to  $[a,b]$ with endpoints $(a',z'), (b',z'), \
b'-a'=a-b;$ in the sets appearing on both sides of the above
formula we have
$$
|\int _{[a',b']} \frac{\partial v}{\partial w} \, dw| >c, \ \
w=\frac{b-a}{|b-a|} .
$$
Then, using the Cauchy-Schwarz inequality
$$\int _{[a',b']}
|\frac{\partial v}{\partial w}|^2 dw >c^2 /(b-a).
$$
From this, via Fubini's theorem, we get for an interval $I$ of
length $\delta$ perpendicular to $[a,b]$ in $z_1$ plane and
$E=[a,b]\times I \times B(z_0 ', \delta )$
$$
\int _E |\frac{\p  v}{\p z_{1} }|^2 \beta ^n > c(n)\delta ^{2n}
c^2/(4|a-b|) =c(n) c^{2(n+1)}/(4^{n+1}c_0 ^{2n}|a-b|)
$$
($c(n)$ - dimensional constant). By the uniform convergence the
analogous estimate would be true for $u_k$, $k$ large enough, in
contradiction to (\ref{2}).

Therefore $v_1 (z')=v_1 (z_2, ..., z_n )=v(z)$ is well defined in
$\Bbb C ^{n-1}$ and inherits the Lipschitz constant from $u$. It
is also m-sh, maximal since for $k\leq m<n$
$$
(dd^c v)^k \we \beta ^{n-k} (z) = (n-m)(\frac{i}{2}dz_1 \we
d\bar{z}_1)\we [(dd^c v_1)^k \we \beta ^{n-k-1}] (z').
$$
Thus, by the induction hypothesis it is constant, but this is
impossible, since: $v\equiv c$  would violate (\ref{1}) as
$$
[u_k ]_{\rho}(0) -u_k (0)\geq 1/3.
$$

Therefore the property stated at the beginning of the proof does
not occur. This means that there is $R>0$,
 such that
if $r>R$ and $z$,  $ \rho>0$ satisfy
$$
[u^2]_r (z) + [u]_{\rho}(z)-2u(z) \geq 4/3 
$$
then for any vector $w$, $|w|=1$
\begin{equation}\label{3}
\frac{1}{Vol(B(z,r))}\int _{B(z,r)}|\frac{\p  u}{\p w }|^2 \beta ^n
\geq c_1 =c_1 (r) .
\end{equation}

Choose origin, $\rho >0$ and $r>R$  so that $u(0)<1/12,
[u]_{\rho}(0)>3/4 $ and $ [u^2]_r (0)
>3/4.$ The choice is possible since for the first inequality one can use $\inf u
=0$, and
 the second and third inequalities for $r, \rho$ large enough follow from the Cartan lemma. Then
$$
0\in U= \{ 2u< [u^2]_r  + [u]_{\rho} - 4/3 \} .
$$
By maximality of $u$ the set $U$ is not bounded. From $dd^c u^2 =
2 (du\we d^c u +udd^c u)$, and (\ref{3}) it follows that
\begin{equation}\label{4}
dd^c [u^2]_{r } \geq 2[udd^c u]_r + c_2 \beta .
\end{equation}
(with $c_2$ depending on $c_1 ,r$ and the average form $[2udd^c
u]_r$ defined in analogy to $[v]_r$ above). Indeed, consider a
simple positive form
$$
\al =\bigwedge_1 ^{n-1} i\al _j \we \bar{\al}_j
$$
normalized by $\alpha\we\be =\be ^n$, with constant coefficients
$(1,0)$ forms $\al _j .$ Vectors dual to $\{\al _j\}$ span a
hyperplane perpendicular to a unit vector $\gamma$. The current
$du\we d^c u$ can be represented in the basis $d\gamma , \al _1 ,
..., \al _{n-1}$ as
$$
du\we d^c u =|\frac{\partial u}{\partial \gamma}|^2
(\frac{i}{2}d\gamma \we d\bar{\gamma }) +\Theta ,
$$
with $\Theta$ containing differentials in which some $\al _j $'s
do appear. Then (\ref{3}) leads to
$$
[du\we d^c u]_r\we \alpha = [|\frac{\partial u}{\partial
\gamma}|^2 \frac{i}{2}d\gamma  \we d\bar{\gamma }]_r \we\al \geq
c_2 (\frac{i}{2}d\gamma  \we d\bar{\gamma }) \we\al =c_2\be ^n .
$$
This proves (\ref{4}), and hence, by Proposition \ref{mix} we have
\begin{equation}\label{5}
dd^c [u^2]_{r }\wedge (dd^c u)^{m-1}\wedge \beta ^{n-m} \geq c_2
(dd^c u)^{m-1}\wedge \beta ^{n-m+1} ,
\end{equation}
on the set $U$.
Take $g(z) =-\epsilon ||z||^2$ with positive $\ep$ so small that
$c_2 \beta
>- 2dd^c g$ (and therefore $[u^2]_{r }+g$ is m-sh on $U$ (see
(\ref{4})))
 and that
$$
U_g = \{ 2u< [u^2]_r (z) + u_{\rho} +g -4/3 \}
$$
is nonempty. The set $U_g$ is bounded since $\lim _{|z|\to\infty
}g(z)=-\infty$ and it is contained in $U$. Then using
(\ref{4}) and the comparison principle 
\begin{align}
&\frac{c_2}{2}\int _{U_g} (dd^c u)^{m-1}\wedge \beta ^{n-m+1}
\nonumber \\ \leq &\int _{U_g} dd^c ([u^2]_{r }+[u]_{\rho}
+g)\wedge(dd^c u)^{m-1}\wedge \beta ^{n-m} \leq 2^m\int _{U_g} (dd^c
u)^{m}\wedge \beta ^{n-m} ,\nonumber
\end{align}
 and having $0$ on the left, we obtain
$$
(dd^c u)^{m-1}\wedge \beta ^{n-m+1} =0$$ on $U_g$.  Apply  the
comparison principle again for the same functions and the
background form $(dd^c u)^{m-2}\we \beta ^{n-m+2}$ in place of
 $(dd^c u)^{m-1}\wedge \beta ^{n-m+1}$:
\begin{align}
&\frac{c_2}{2}\int _{U_g} (dd^c u)^{m-2}\wedge \beta ^{n-m+2} \nonumber \\
\leq &\int _{U_g} dd^c ([u^2]_{r }+[u]_{\rho} +g)\wedge(dd^c
u)^{m-2}\wedge \beta ^{n-m+1}\\ \leq & \, 2^{m-1}\int _{U_g} (dd^c
u)^{m-1}\wedge \beta ^{n-m+1} \nonumber
\end{align}
Therefore $$(dd^c u)^{m-2}\wedge \beta ^{n-m+2}  =0$$ on $U_g .$
Repeating the procedure, after $m-2$ steps we get $\int
_{U_g}\beta ^n =0$, a contradiction, since $U_g$ is nonempty and
open.

\end{proof}

{\bf Acknowledgment} The authors were partially supported by  NCN grant 2011/01/B/ST1/00879.

{\noindent Rutgers University, Newark, NJ 07102, USA;\\
Faculty of Mathematics and Computer Science,
Jagiellonian University 30-348 Krak\'ow, \L ojasiewicza 6,
Poland;\\ e-mail: {\tt slawomir.dinew@im.uj.edu.pl}}\\ \\

{\noindent Faculty of Mathematics and Computer Science,
Jagiellonian University 30-348 Krak\'ow, \L ojasiewicza 6,
Poland;\\ e-mail: {\tt slawomir.kolodziej@im.uj.edu.pl}}\\ \\


\begin{thebibliography}{CNS}
\bibitem[AV]{AV} S. Alekser, M. Verbitsky, Quaternionic Monge-Amp\`ere equations and Calabi problem for HKT-manifolds, \it Israel J. Math. \rm 176 (2010), 109138.
 \bibitem[BT1]{BT} E. Bedford, B.A. Taylor, The Dirichlet problem for a
 complex Monge-Amp\`ere equation, \it Invent. Math.\rm\ 37 (1976),
 1-44.
 \bibitem[BT2]{BT2} E. Bedford, B.A. Taylor, A new capacity for plurisubharmonic
functions, \it Acta Math. \rm 149 (1982), 1-40.
 \bibitem[Bl1]{Bl} Z. B\l ocki, Weak solutions to the complex Hessian
 equation, \it Ann. Inst. Fourier (Grenoble) \rm\ 55, 5 (2005),
 1735-1756.
\bibitem[Bl2]{Bl2} Z. B\l ocki, A gradient estimate in the Calabi-Yau theorem, \it Math. Ann.\rm\ 344 (2009), 317-327.
\bibitem[Ch1]{Ch1} X. X. Chen, The space of K\"ahler metrics, \it J. Diff. Geom. \rm 56 (2000), 189-234.
\bibitem[Ch2]{Ch2} X. X. Chen, On the lower bound of the Mabuchi
energy and its application, \it IMRN \rm 12 (2000), 607-623.
\bibitem[Ch3]{Ch3} X. X. Chen, A new parabolic flow in
K\"ahler manifolds, \it Comm. Anal. Geom. \rm 12 (2004), 837-852.
\bibitem[Chi]{Chi} L. H. Chinh, Solutions to degenerate complex Hessian equations, \it J. Math. Pures Appl.\rm\ 100 (2013), 785-805.
\bibitem[DK]{DK} S. Dinew, S. Ko\l odziej, {A priori estimates for the complex Hessian equations,} \it Anal. PDE\ \rm 7  (2014), 227-244.
\bibitem[D]{D} S. K. Donaldson,  Moment maps and diffeomorphisms, \it Asian J. Math. \rm 3, 
 (1999), 1-16.
\bibitem[Ga]{Ga} L. G\aa rding, An inequality for hyperbolic
  polynomials, \it J. Math. Mech.\rm\ 8 (1959) 957-965.
\bibitem[G]{G} P. Guan, On the gradient estimate for Monge-Amp\`ere equation on K\"ahler manifold, Private notes.
\bibitem[HK]{HK} W. K. Hayman, P. B. Kennedy, Subharmonic functions Vol I,
\it London Math. Soc. Monographs, \rm (1976).
\bibitem[Hou]{Hou} Z. Hou, Complex Hessian equation on  K\"ahler manifold, \it Int. Math. Res. Not. \rm 16 (2009), 3098-3111.
\bibitem[HMW]{HMW} Z. Hou, X.-N. Ma, D. Wu, A second order
estimate for complex Hessian equations on a compact K\"ahler
manifold, \it Math. Res. Lett. \rm 17 (2010), 547-561.
\bibitem[J]{J} A. Jbilou, \' Equations hessiennes complexes sur des vari\'et\'es k\"ahl\'eriennes, \it C. R. Math. Acad. Sci. Paris \rm 348 (2010), 41-46.
\bibitem[Ko]{Kok} V. N. Kokarev, Mixed volume forms and a complex equation of
Monge-Amp\`ere type on K\"ahler manifolds of positive curvature,
\it Izv. RAN. Ser. Mat. \rm 74:3 (2010), 65-78.
\bibitem[Li]{Li} S.-Y. Li, On the Dirichlet problems for symmetric function
equations of the eigenvalues of the complex Hessian, \it Asian J. Math.\rm\ 8
(2004), 87-106.
\bibitem[SW]{SW} J. Song, B. Weinkove, On the convergence and singularities of the $J$-flow with applications to the Mabuchi energy, \it Comm. Pure. Appl. Math. \rm\ 61 (2008), 210-229.
\bibitem[W]{W} X.-J. Wang, The $k$-Hessian equation, \it Lect. Not. Math.
\rm 1977 (2009).
\bibitem[Y]{Y} S. T. Yau, On the Ricci curvature of a compact K\"ahler manifold and the complex Monge-Amp\`ere equation, \it Comm. Pure Appl. Math. \rm 31 (1978), 339-411.
\bibitem[Z]{Z} X. Zhang, A priori estimates for complex Monge-Amp\`ere equation on Hermitian manifolds, \it  Int. Math. Rs. Not. \rm 19 (2010), 381 43836.
\end{thebibliography}
\end{document}